\numberwithin{equation}{section}
\newtheorem{prop}{Proposition}
\newtheorem{thm}[prop]{Theorem}
\numberwithin{prop}{section}
\theoremstyle{definition}
\newtheorem{rmk}[prop]{Remark}
\newcommand{\delb}{\bar{\partial}}\newcommand{\dt}{\frac{\partial}{\partial t}}
\newcommand{\brs}[1]{\left| #1 \right|}
\newcommand{\gG}{\Gamma}
\newcommand{\gD}{\Delta}
\newcommand{\gw}{\omega}
\renewcommand{\ge}{\epsilon}
\newcommand{\N}{\nabla}
\newcommand{\FF}{\mathcal F}
\newcommand{\LL}{\mathcal L}
\renewcommand{\bar}[1]{\overline{#1}}
\DeclareMathOperator{\Rc}{Rc}
\DeclareMathOperator{\tr}{tr}
\begin{document}

\title[K\"ahler stability of symplectic forms]{K\"ahler stability of symplectic forms}

\begin{abstract}
\end{abstract}

\author{Jeffrey Streets}
\address{Rowland Hall\\
        University of California, Irvine\\
        Irvine, CA 92617}
\email{\href{mailto:jstreets@uci.edu}{jstreets@uci.edu}}

\author{Gang Tian}
\address{BICMR and SMS, Peking University\\ 
Beijing, P.R. China, 100871}
\email{gtian@math.pku.edu.cn}

\date{\today}

\begin{abstract} Using dynamical stability of symplectic curvature flow, we show that on a compact Calabi-Yau manifold, any small symplectic deformation of a K\"ahler form remains K\"ahler.
\end{abstract}

\thanks{The first author acknowledges support from the NSF via DMS-1454854. The second author is partially supported by NSFC-11890661.  This article is warmly dedicated to Peter Li on the occasion of his 70th birthday.}

\maketitle

\section{Introduction}

A central theme in complex and symplectic geometry is to understand the stability of various properties under natural deformations, and the relationship to uniqueness and moduli problems.  For instance, the classic result of Kodaira-Spencer \cite{KodairaSpencer} shows that small complex deformations of K\"ahler manifolds remain K\"ahler.  The main result of this note is local stability of K\"ahler structures under symplectic deformations on Calabi-Yau manifolds:

\begin{thm} \label{t:mainthm} Let $(M^{2n}, \gw, J)$ be a compact K\"ahler manifold with $c_1(M, J) = 0$.  There exists $\ge > 0$ depending on $\gw$ so that if $\gw'$ is another symplectic form on $M$ such that
\begin{align*}
\brs{\gw - \gw'}_{C^{\infty}(\gw)} < \ge,
\end{align*}
then there exists an integrable complex structure $J'$ on $M$ compatible with $\gw'$.
\end{thm}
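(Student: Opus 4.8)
The plan is to deform the initial data by the symplectic curvature flow---using that on a Calabi--Yau manifold the Ricci-flat K\"ahler structure $(g, \gw, J)$ is a stationary solution of the flow---and then to correct the limiting symplectic form back to $\gw'$ by Moser's theorem. To set up the flow, recall that the almost complex structures compatible with a fixed symplectic form constitute a nonempty contractible set varying smoothly with the form; hence one may choose an almost complex structure $J_0$ compatible with $\gw'$ and $C^\infty$-close to $J$. The associated Riemannian metric $g_0 = \gw'(\cdot, J_0 \cdot)$ then defines an almost K\"ahler structure $(g_0, \gw', J_0)$ whose distance to the K\"ahler structure $(g, \gw, J)$ is controlled by $\ge$.

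I would then run the symplectic curvature flow with initial data $(g_0, \gw', J_0)$. Because $c_1(M, J) = 0$, the structure $(g, \gw, J)$ is Ricci-flat K\"ahler and thus a stationary point of the flow, lying in the finite-dimensional smooth family of Calabi--Yau structures near $(\gw, J)$. The dynamical stability of the flow at this fixed point then guarantees, for $\ge$ sufficiently small, long-time existence and $C^\infty$ convergence to a limit $(g_\infty, \gw_\infty, J_\infty)$ that is again Ricci-flat K\"ahler; in particular $J_\infty$ is integrable and $\gw_\infty$ is a genuine K\"ahler form. Moreover, since $c_1(M,J) = 0$ forces the evolution of $\gw$ to be exact, the cohomology class is preserved along the flow, giving $[\gw_\infty] = [\gw']$, while the whole flow---and hence $\gw_\infty$---stays $C^\infty$-close to $\gw$.

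To finish, I would apply Moser's theorem: $\gw_\infty$ and $\gw'$ are cohomologous symplectic forms that are $C^\infty$-close, so there is a diffeomorphism $\phi$ of $M$, isotopic to the identity, with $\phi^* \gw_\infty = \gw'$. Then $J' := \phi^* J_\infty$ is integrable, being the pullback of an integrable structure, and is compatible with $\phi^* \gw_\infty = \gw'$. This $J'$ is the required complex structure.

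The main obstacle is the dynamical stability invoked in the second step. Since the symplectic curvature flow is degenerate along the diffeomorphism group, one must first fix a gauge by a DeTurck-type modification. The kernel of the resulting linearized operator should be identified with the tangent space to the moduli of Calabi--Yau structures through $(\gw, J)$, so that every kernel direction is integrable---tangent to an actual curve of Ricci-flat K\"ahler structures, with no obstruction---while the remaining spectrum is shown to be strictly stable. A genuinely almost-K\"ahler subtlety is that the non-integrable deformations of $J$ lie outside the kernel and must be verified to be strictly contracted by the flow, so that the limit is forced to be integrable. Granting the integrability of the kernel together with strict transverse stability, convergence to a nearby fixed point follows from a center-manifold or maximal-regularity argument, or from a {\L}ojasiewicz--Simon inequality; this spectral and convergence analysis is where the bulk of the work lies.
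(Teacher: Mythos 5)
Your overall strategy (run symplectic curvature flow from an almost K\"ahler structure $(\gw', J_0)$ near the K\"ahler one, use dynamical stability to converge to a Calabi--Yau limit, then correct the symplectic form back to $\gw'$ by Moser) is the same as the paper's, and your final Moser step and the cohomology-class argument via $c_1 = 0$ are essentially what the paper does. However, there is a genuine gap at the foundation of your second step: you assert that ``because $c_1(M,J)=0$, the structure $(g,\gw,J)$ is Ricci-flat K\"ahler and thus a stationary point of the flow.'' That is not implied by the hypothesis. The theorem only assumes $[\Rc]=c_1(M,J)=0$ as a cohomology class; the given K\"ahler metric $\gw$ is in general not Ricci-flat, hence not a fixed point of SCF, and the dynamical stability theorem cannot be invoked relative to it. Nor can you simply replace it by the Calabi--Yau metric $\gw_{CY}\in[\gw]$ furnished by Yau's theorem, because $\gw'$ is assumed $\ge$-close to $\gw$, not to $\gw_{CY}$, and these may be far apart.

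The paper closes exactly this gap before starting the flow: by Yau's theorem there is a unique Ricci-flat $\gw_{CY}\in[\gw]$ compatible with $J$, and applying Moser's lemma to the cohomologous path $t\,\gw_{CY}+(1-t)\gw$ gives a diffeomorphism $\phi$ with $\phi^*\gw_{CY}=\gw$, so that $(\gw,\phi^*J)$ is a genuine Calabi--Yau structure whose symplectic form is exactly $\gw$. Only then is the compatible almost complex structure $J'$ for $\gw'$ constructed close to $\phi^*J$ (not to $J$), and the stability theorem is applied relative to the fixed point $(\gw,\phi^*J)$. Your proposal needs this preliminary Yau-plus-Moser normalization (or an equivalent device) for the stability hypothesis to be satisfiable; with it inserted, the rest of your argument---choice of $J_0$, long-time existence and exponential convergence to a Calabi--Yau limit, preservation of $[\gw']$ since $P\in\pi c_1=0$ in cohomology, and the final Moser correction yielding $J'=\phi^*J_\infty$---matches the paper. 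Two smaller points: the relevant Chern class along the flow is $c_1(M,J_t)$, which vanishes because $J_0$ is homotopic to $J$, a point worth stating; and your sketch of the stability theorem (gauge fixing, kernel equal to integrable Calabi--Yau deformations, strict stability transverse to it) is consistent with the paper's, which identifies the kernel via Koiso's splitting and integrability via Tian's deformation theorem rather than a {\L}ojasiewicz--Simon inequality.
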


\begin{rmk}
\begin{enumerate}
\item The proof of Theorem \ref{t:mainthm} is an elementary consequence of the dynamical stability of symplectic curvature flow (SCF) \cite{SCF} near Calabi-Yau metrics.  This dynamical stability of the more general family of `almost Hermitian curvature flows' introduced in \cite{SCF} was shown in the thesis of Smith (cf. \cite{SmithSCF}).  We sketch the proof below in the simplified case of SCF.
\item It follows from the proof that in fact the $C^{\infty}$ smallness of the perturbation can be weakened to smallness in an appropriate Sobolev space.
\item The space of deformations of symplectic cohomology classes which remain K\"ahler under deformation was studied in \cite{deBart}.
\item Theorem \ref{t:mainthm} follows in some cases using results from complex deformation theory, and our proof provides an alternative using a geometric flow adapted to almost K\"ahler geometry.
\item Recently, the result of Theorem \ref{t:mainthm} was shown in the case $n = 3$ in \cite{IIAflow} using a geometric flow of symplectic forms adapted to that dimension.
\end{enumerate}
\end{rmk}

To begin we recall fundamental properties of symplectic curvature flow \cite{SCF}.  An almost K\"ahler structure is a pair $(\gw, J)$ of a symplectic structure together with a compatible almost complex structure $J$ such that $g = \gw J$ is a Riemannian metric.  In general $J$ is not integrable and $N$ will denote the Nijenhuis tensor of $J$.  Almost K\"ahler structures come equipped with a Chern connection, the unique connection $\N$ on the tangent bundle such that $\N g \equiv 0, \N J \equiv 0$, and $T^{1,1} = 0$, where $T^{1,1}$ denotes the $(1,1)$ component of the torsion of $\N$.  Let $\Omega$ denote the curvature of $\N$, and define $P = \tr \Omega J \in \pi c_1(M, J)$.  A one-parameter family of almost K\"ahler structures $(g_t, \gw_t, J_t)$ satisfies symplectic curvature flow if
\begin{gather} \label{f:SCF}
\begin{split}
\dt g =&\ -2 \Rc + \frac{1}{2} B^1 - B^2,\\
\dt \gw =&\ - P,\\
\dt J =&\ - D^* D J + \mathcal N + \mathcal R,
\end{split}
\end{gather}
where $D$ denotes the Levi-Civita connection, and
\begin{align*}
B^1_{ij} =&\ g^{kl} g_{mn} D_i J_k^m D_j J_l^n, \qquad \qquad B^2_{ij} = g^{kl} g_{mn} D_k J_i^m D_l J_j^n,\\
\mathcal N_i^j =&\ g^{jk} g_{mn} g^{pq} D_p J_r^m J_i^r D_q J_k^n, \qquad \mathcal R_i^j = J_i^k \Rc_k^j - \Rc_i^k J_k^i.
\end{align*}
Note that the our description of symplectic curvature flow is redundant as any two of $(g, \gw, J)$ suffices to recover the third by compatibility.  The fundamental points (cf. \cite{SCF} Theorem 1.6) are that symplectic curvature flow is locally well-posed for arbitrary initial data on compact manifolds, preserves the almost K\"ahler conditions, and if $J_0$ is integrable reduces to K\"ahler-Ricci flow.

\begin{thm} \label{t:dynstabSCF} (cf. \cite{SmithSCF} Theorem 1.1) Let $(M^{2n}, \gw_{CY}, J_{CY})$ denote a compact Calabi-Yau manifold.  There exists $\ge > 0$ so that if $(\gw, J)$ is an almost K\"ahler structure such that
\begin{align*}
\brs{\gw_{CY} - \gw}_{C^{\infty}(\gw_{CY})} + \brs{J_{CY} - J}_{C^{\infty}(\gw_{CY})} < \ge,
\end{align*}
then the solution to symplectic curvature flow with initial condition $(\gw, J)$ exists on $[0,\infty)$ and converges exponentially to a K\"ahler Calabi-Yau structure $(\gw_{\infty}, J_{\infty})$.
\end{thm}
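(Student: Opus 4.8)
The plan is to recognize the Calabi-Yau structure $(\gw_{CY}, J_{CY})$ as a member of a finite-dimensional manifold of stationary solutions and to invoke the theory of dynamical stability of normally stable equilibria for quasilinear parabolic systems. The system \eqref{f:SCF} is only weakly parabolic because of its invariance under the diffeomorphism group, so the first step is to fix a gauge. Following the DeTurck method, I would add to \eqref{f:SCF} the Lie derivative along the vector field $W^i = g^{jk}(\G^i_{jk} - \bar{\G}^i_{jk})$ measuring the difference between the Chern connection of $(\gw, J)$ and that of the background $(\gw_{CY}, J_{CY})$. This produces a strictly parabolic quasilinear system $\dt u = F(u)$ for $u = (g, J)$, the form $\gw$ being recovered by compatibility. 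Because $\Rc$, $P$, and each of the correction terms $B^1$, $B^2$, $\mathcal N$, $\mathcal R$ vanishes at a Ricci-flat K\"ahler structure with $c_1 = 0$, and the DeTurck term vanishes at the background, the structure $(\gw_{CY}, J_{CY})$ is a stationary point of the gauge-fixed flow.

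Next I would identify the fixed-point set. The almost K\"ahler structures near $(\gw_{CY}, J_{CY})$ that are stationary for the gauge-fixed flow are precisely the nearby Calabi-Yau structures, and these form a smooth finite-dimensional manifold $\FF$ whose tangent space at the base point is the space of infinitesimal Calabi-Yau deformations. The essential geometric input here is the Bogomolov-Tian-Todorov unobstructedness theorem together with the openness of the K\"ahler cone, which guarantee that the moduli space is smooth and that every infinitesimal deformation integrates to a genuine family. I would then compute the linearization $L = DF$ at the base point. Up to the gauge terms, $L$ acts on the symmetric two-tensor part as the Lichnerowicz Laplacian $\gD_L$ (times a positive constant), and is self-adjoint with respect to the natural $L^2$ inner product, with the coupling to the $J$-component controlled by the integrability of $J_{CY}$.

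The heart of the argument is the spectral analysis establishing normal stability, and I anticipate this to be the \emph{main obstacle}. Two facts are required. First, linear stability: the spectrum of $L$ is non-positive, which for a Ricci-flat K\"ahler metric is the non-positivity of the second variation of the relevant functional, equivalently $\gD_L \le 0$ on symmetric two-tensors orthogonal to its kernel. Second, integrability: $\Ker L = T\FF$, so that there are no infinitesimal zero modes beyond those tangent to genuine families of Calabi-Yau structures. Together these state that the equilibrium is normally stable and that $L$ has a spectral gap below $0$. The delicacy is precisely in matching the analytic kernel of $L$ with the geometric tangent space $T\FF$ while simultaneously ruling out positive spectrum; both rest on the special structure of Calabi-Yau geometry.

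Finally, I would conclude via the generalized principle of linearized stability for normally stable equilibria in the maximal-regularity framework (e.g.\ the theory of Pr\"uss-Simonett-Zacher), applied in an appropriate Sobolev or little-H\"older setting. For any initial data sufficiently close to $\FF$ this produces global existence of the gauge-fixed flow and exponential convergence to some point of $\FF$, with rate controlled by the spectral gap. It remains to remove the gauge: the family of DeTurck diffeomorphisms $\varphi_t$ generated by $W$ converges exponentially to a limiting diffeomorphism $\varphi_\infty$, and pulling back the limit of the gauge-fixed flow yields exponential convergence of the original solution of \eqref{f:SCF} to a K\"ahler Calabi-Yau structure $(\gw_\infty, J_\infty)$. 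The weakening of the hypothesis to Sobolev smallness noted in the remark follows because the maximal-regularity theory only requires control in the trace space of the parabolic problem.
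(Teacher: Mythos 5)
Your proposal follows essentially the same skeleton as the paper's proof: DeTurck gauge-fixing to get a strictly parabolic system, linearization at the Calabi-Yau structure, linear stability plus integrability of the kernel via Tian's unobstructedness theorem \cite{Tiandeformation}, and finally exponential convergence of the gauged flow followed by exponential convergence of the DeTurck diffeomorphisms. Two differences are worth recording. First, the paper's gauge field is the difference of \emph{Levi-Civita} Christoffel symbols, $X^k = g^{ij}(\G^k_{ij} - (\G_{CY})^k_{ij})$, together with what the paper calls the elementary but important point that $X^k = \gw^{ij}\N^{CY}_i J^k_j$, i.e.\ the gauge vector is a first-order expression in $J$ against the background connection; you describe the same formula as a difference of \emph{Chern} connections, which is not the standard Ricci-DeTurck choice and is not what the paper uses (the two agree at the K\"ahler background but not along the flow), and the identity above is precisely what controls the cross-coupling between the $g$- and $J$-equations. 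Second, for the concluding nonlinear step you invoke the Pr\"uss-Simonett-Zacher normally-stable-equilibria machinery, while the paper runs the more classical perturbation-of-the-linearized-flow argument of \cite{Sesum, Lotaystab, SmithSCF, HCF}; these are interchangeable here, and your route does deliver directly the Sobolev-smallness refinement noted in the paper's remark, at the cost of verifying that the equilibria of the gauged flow near the base point (Calabi-Yau structures satisfying the gauge condition $X = 0$, not all nearby Calabi-Yau structures) form a manifold with tangent space equal to the kernel --- a small implicit-function-theorem point you elide.

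The one genuine soft spot is the step you yourself flag as the main obstacle. Your justification of linear stability --- ``non-positivity of the second variation of the relevant functional, equivalently $\gD_L \le 0$'' --- is circular: non-positivity of the second variation at a Ricci-flat metric \emph{is} the statement to be proved, and it fails to be automatic for general Ricci-flat metrics; the K\"ahler structure is essential. The paper supplies the mechanism, due to Koiso \cite{Koisodef}: the linearization in fact \emph{decouples} into $\LL_1(h) = \gD h + 2R \circ h$ and $\LL_2(K) = \gD K + 2R \circ K$ (no $(h,K)$ coupling at the Calabi-Yau point, contrary to your expectation of a coupling controlled by integrability); one then splits $h = h_S + h_A$ into $J$-symmetric and $J$-antisymmetric parts, identifies $\LL_1$ on $h_S$ with the Hodge Laplacian acting on the $(1,1)$-form $h_S J_{CY}$, and identifies $\LL_1$ on $h_A$ (after raising an index) and $\LL_2$ with the $\delb$-Hodge Laplacian on $\Lambda^{0,1} \otimes T^{1,0}$. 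Negative semidefiniteness and the identification of the kernel with harmonic representatives --- hence with Einstein and complex-structure deformations, integrable by \cite{Tiandeformation} --- then follow from Hodge theory. Without this identification, or an equivalent substitute, your spectral-gap hypothesis is an assumption rather than a conclusion, so you should either cite Koiso's theorem explicitly or reproduce the decomposition.
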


\begin{proof}
The proof relies on ideas from parabolic regularity theory and so we work directly with the gauge-modified flow which is strictly parabolic.  For any almost K\"ahler structure $(g, J)$ we define the vector field
\begin{align*}
X^k(g, J) = g^{ij} \left( \gG_{ij}^k - (\gG_{CY})_{ij}^k \right).
\end{align*}
An elementary but important point is that this vector field is equivalently expressed as
\begin{align*}
X^k =&\ \gw^{ij} \N^{CY}_i J_j^k.
\end{align*}
Using $X$ we define the gauge-fixed symplectic curvature flow:
\begin{gather} \label{f:gfSCF}
\begin{split}
\dt g =&\ -2 \Rc + \frac{1}{2} B^1 - B^2 + L_X g =: \FF_1(g,J)\\
\dt J =&\ - D^* D J + \mathcal N + \mathcal R + L_X J =: \FF_2(g,J)
\end{split}
\end{gather}
The analysis centers on a sharp characterization of the linearization of this flow.  To find this fix a one-parameter family of almost K\"ahler structures $(g_t, J_t)$ such that $(g_0, J_0) = (g_{CY}, J_{CY})$ and $\dot{g} = h, \dot{J} = K$.  Lengthy but straightforward computations using that $(g_0, J_0)$ is Calabi-Yau show (cf. \cite{SCF} proof of Theorem 1.6)
\begin{align*}
\mathcal L_{(g_{CY}, J_{CY})} \mathcal \FF_1 (h,K) =&\ \gD h + 2 R \circ h =: \mathcal L_1(h),\\
\mathcal L_{(g_{CY}, J_{CY})} \mathcal \FF_2 (h,K) =&\ \gD K + 2 R \circ K =: \mathcal L_2(K),\\
\end{align*}
where
\begin{align*}
(R \circ h)_{ij} =&\ R_{i k l j} h^{kl}, \qquad (R \circ K)_j^i = g^{kl} R_{j l m}^i J_k^m.
\end{align*}
To analyze this operator we recall the work of Koiso \cite{Koisodef}.  The operator $\LL_1$ is the Einstein deformation operator at a Ricci-flat metric, and splits according to the decomposition $h = h_S + h_A$ into the  $J$-symmetric and $J$-antisymmetric pieces.  The action on $h_S$ corresponds precisely to the Hodge Laplacian acting on the $(1,1)$-form $h_S J_{CY}$, which is negative semidefinite with kernel determined by harmonic $(1,1)$ forms, which are canonically identified with $H^{1,1}(M, J_{CY})$.  The action on $h_A$ is identified, after raising an index with $g_{CY}$, with the $\delb$-Hodge Laplacian acting on $\Lambda^{0,1} \otimes T^{1,0}$, in this case restricted to symmetric endomorphisms.  Furthermore, the operator $\mathcal L_2$ is again this same $\delb$-Hodge Laplacian acting on $\Lambda^{0,1} \otimes T^{1,0}$, whose kernel is identified with the space of deformations of $J_{CY}$.  This again is negative semidefinite with kernel identified with $H^{2,0}(M, \mathbb C)$.

Thus we have shown that the linearized operator is negative semidefinite, with kernel identified with the space of Einstein deformations of the given Calabi-Yau.  It follows from a result in \cite{Tiandeformation} that every such infinitesimal deformation is in fact integrable.
Given this weak linear stability, together with an explicit description of the kernel, which is integrable, the remainder of the proof follows standard lines (cf. for instance \cite{Lotaystab, Sesum, SmithSCF,HCF}).  In particular, by treating the flow as a small perturbation of the linearized flow, and using the analysis of the linearized operator above, one can show exponential decay towards some Calabi-Yau structure.  Given this exponential convergence, it is elementary to show that the family of diffeomorphisms relating (\ref{f:gfSCF}) and (\ref{f:SCF}) converges exponentially, and thus the solution to (\ref{f:SCF}) is also converging to a Calabi-Yau structure exponentially fast.
\end{proof}

We now prove Theorem \ref{t:mainthm} as a consequence of Theorem \ref{t:dynstabSCF}:

\begin{proof}[Proof of Theorem \ref{t:mainthm}] Given $(M^{2n}, \gw, J)$ a compact K\"ahler manifold with $c_1(M, J) = 0$, by Yau's theorem \cite{YauCC} there exists a unique Calabi-Yau metric $\gw_{CY} \in [\gw]$ compatible with $J$.  Applying Moser's Lemma to the family of cohomologous symplectic forms $\gw_t = t \gw_{CY} + (1-t) \gw$ we obtain the existence of a diffeomorphism $\phi$ such that $\phi^* \gw_{CY} = \gw$.  By construction the pair $(\gw, \phi^* J) = (\phi^* \gw_{CY}, \phi^* J)$ is K\"ahler, Calabi-Yau.  Now fix $\gw'$ such that $\brs{\gw - \gw'}_{C^{\infty}(\gw)} < \ge$.  For sufficiently small $\ge > 0$, the one-parameter family $\gw_t = t \gw' + (1-t) \gw$ consists of symplectic forms, and we deform $\phi^* J$ along this path to produce an almost complex structure $J'$ compatible with $\gw'$ such that $\brs{\phi^* J - J'}_{C^{\infty}(\gw)} < C(\gw) \ge$ (cf. \cite{SCF} Lemma 4.3).  For $\ge$ chosen sufficiently small at the outset, the pair $(\gw', J')$ satisfies the hypothesis of Theorem \ref{t:dynstabSCF} relative to the Calabi-Yau structure $(\gw, \phi^* J)$, and thus the solution to symplectic curvature flow with initial condition $(\gw', J')$ exists globally and converges to a Calabi-Yau structure $(\gw_{\infty}, J_{\infty})$, which further satisfies $\brs{\gw - \gw_{\infty}}_{C^{\infty}(\gw)} < C(\gw) \ge$.  By construction, since $J'$ is connected by a smooth path to $J$, it follows that $c_1(M, J') = 0$, and this property is preserved along the symplectic curvature flow.  This in turn implies that the cohomology class of $[\gw']$ is preserved along the flow, thus $[\gw_{\infty}] = [\gw']$.  Since both $\gw_{\infty}$ and $\gw'$ are $\ge$-close to the symplectic form $\gw$, it follows that the path $t \gw' + (1-t) \gw_{\infty}$ consists of cohomologous symplectic forms, again applying Moser's Lemma we obtain a diffeomorphism $\psi$ such that $\psi^* \gw_{\infty} = \gw'$.  It follows that the pair $(\gw', \psi^* J_{\infty})$ is K\"ahler, in fact Calabi-Yau, finishing the proof.
\end{proof}


\begin{thebibliography}{10}

\bibitem{deBart}
Paolo de~Bartolomeis.
\newblock Symplectic deformations of {K}\"{a}hler manifolds.
\newblock {\em J. Symplectic Geom.}, 3(3):341--355, 2005.

\bibitem{IIAflow}
Teng Fei, Duong~H. Phong, Sebastien Picard, and Xiangwen Zhang.
\newblock Stability of the {T}ype ii{A} flow and its applications in symplectic
  geometry.
\newblock {\em arxiv:2112.15580}.

\bibitem{KodairaSpencer}
K.~Kodaira and D.~C. Spencer.
\newblock On deformations of complex analytic structures. {III}. {S}tability
  theorems for complex structures.
\newblock {\em Ann. of Math. (2)}, 71:43--76, 1960.

\bibitem{Koisodef}
N.~Koiso.
\newblock Einstein metrics and complex structures.
\newblock {\em Invent. Math.}, 73(1):71--106, 1983.

\bibitem{Lotaystab}
Jason D. Lotay and Yong Wei.
\newblock Stability of torsion-free $G_2$ structures along the Laplacian flow.
\newblock {\em J. Diff. Geom.}, 111(3):495--526, 2019.

\bibitem{Sesum}
Natasa Sesum.
\newblock Linear and dynamical stability of {R}icci-flat metrics.
\newblock {\em Duke Math. J.}, 133(1):1--26, 2006.

\bibitem{SmithSCF}
Daniel~J. Smith.
\newblock Stability of the almost {H}ermitian curvature flow.
\newblock {\em arxiv:1308.6214}.

\bibitem{HCF}
Jeffrey Streets and Gang Tian.
\newblock Hermitian curvature flow.
\newblock {\em J. Eur.  Math. Soc.}, 13(3):601--634, 2011.

\bibitem{SCF}
Jeffrey Streets and Gang Tian.
\newblock Symplectic curvature flow.
\newblock {\em J. Reine Angew. Math.}, 696:143--185, 2014.

\bibitem{Tiandeformation}
Gang Tian.
\newblock Smoothness of the universal deformation space of compact
  {C}alabi-{Y}au manifolds and its {P}etersson-{W}eil metric.
\newblock In {\em Mathematical aspects of string theory ({S}an {D}iego,
  {C}alif., 1986)}, volume~1 of {\em Adv. Ser. Math. Phys.}, pages 629--646.
  World Sci. Publishing, Singapore, 1987.


\bibitem{YauCC}
Shing~Tung Yau.
\newblock On the {R}icci curvature of a compact {K}\"ahler manifold and the
  complex {M}onge-{A}mp\`ere equation. {I}.
\newblock {\em Comm. Pure Appl. Math.}, 31(3):339--411, 1978.

\end{thebibliography}

\end{document}